\documentclass[proceedings,submission%
]{dmtcs} 


\usepackage[utf8]{inputenc}
\usepackage{subfigure}

%


\usepackage{times}
\usepackage{amssymb,amsmath}
\usepackage{xspace}
\usepackage{epsfig}
\usepackage{enumitem}
\usepackage{xcolor}
\usepackage{tikz}
\usepackage{gensymb}
\usepackage[T1]{fontenc}
\usepackage{bm}

\usepackage{hyperref}

\newtheorem{lemma}{Lemma}[section]
\newtheorem{theorem}[lemma]{Theorem}

\newtheorem{proposition}[lemma]{Proposition}

\newtheorem{defprop}[lemma]{Definition-Proposition}

\newtheorem{remark}{Remark}
\newtheorem{fact}[remark]{Fact}

\newcommand{\RR}{\mathbb{R}}

\newcommand{\II}{\mathbb{I}}
\newcommand{\OO}{\mathbb{O}}
\newcommand{\QQ}{\mathbb{Q}}
\newcommand{\PP}{\mathbb{P}}
\newcommand{\PPna}{\PP_n^{(\alpha)}}

\newcommand{\Pola}{\Lambda_\star^{(\alpha)}}
\newcommand{\Pol}{\Lambda_\star}

\newcommand{\esper}{\mathbb{E}}

\newcommand{\comment}[1]{}

\DeclareMathOperator{\Sym}{Sym}

\DeclareMathOperator{\Var}{Var}

\def\Ch{Ch}

\DeclareMathOperator{\Vect}{Vect}

\author[M.~Do\l\k ega and V. F\'eray]{Maciej Do\l\k ega\addressmark{1}\thanks{
Research of MD is supported by the grant of National Center of Sciences 2011/03/N/ST1/00117 for the years 2012--2014 and by the contract with Uniwersytet
Wrocławski 1380/M/IM/11.}
\and Valentin F\'eray\addressmark{2}\thanks{Partially supported by ANR project PSYCO.}}


\address{\addressmark{1}Instytut Matematyczny,
Uniwersytet Wroc\l awski,  \mbox{pl.\ Grunwaldzki~2/4,} 50-384
Wroc\l aw, Poland\\
\addressmark{2}LaBRI, Universit\'e Bordeaux 1, 351 cours de la Lib\'eration, 33 400
 Talence, France}

\keywords{Jack polynomials; Kerov's polynomials; free cumulants; Young diagrams}


\title{On Kerov polynomials for Jack characters}

\begin{document}

\maketitle

\begin{abstract}
    {\bf Abstract.}
    We consider a deformation of Kerov character polynomials,
    linked to Jack symmetric functions.
    It has been introduced recently by M.~Lassalle, who formulated several
    conjectures on these objects, suggesting some underlying combinatorics.
    We give a partial result in this direction, showing
    that some quantities are polynomials in the Jack parameter $\alpha$
    with prescribed degree.

    Our result has several interesting consequences in various directions.
    Firstly, we give a new proof of the fact that the coefficients of Jack 
    polynomials expanded in the monomial or power-sum basis depend
    polynomially in $\alpha$.
    Secondly, we describe asymptotically the shape of random Young diagrams
    under some deformation of Plancherel measure.

    {\bf R\'esum\'e.}
    On considère une déformation des polynômes de Kerov pour les caractères
    du groupe symétrique.
    Cette déformation est liée aux polynômes de Jack.
    Elle a été récemment définie par M.~Lassalle, qui a proposé plusieurs
    conjectures sur ces objets, suggérant ainsi l'existence d'une
    combinatoire sous-jacente.
    Nous donnons un résultat partiel dans cette direction,
    en montrant que certaines quantités sont des polynômes
    (dont on contrôle les degrés)
    en fonction du paramètre de Jack $\alpha$.

    Notre résultat a des conséquences intéressantes dans des directions
    diverses.
    Premièrement, nous donnons une nouvelle preuve de la polynomialité 
    (toujours en fonction de $\alpha$) des
    coefficients du développement des polynômes de Jack dans la base monomiale.
    Deuxièmement, nous décrivons asymptotiquement la forme
    de grands diagrammes
    de Young distribués selon une déformation de la mesure de Plancherel.
    \end{abstract}

This paper is an extended abstract of \cite{FullPaper},
which will be submitted elsewhere.

\section{Introduction}

\subsection{Polynomiality of Jack polynomials}
In a seminal paper \cite{Jack1970/1971},
H.~Jack introduced a family of symmetric functions
$J^{(\alpha)}_\lambda$ depending on an additional parameter
$\alpha$ called \emph{Jack polynomials}.
Up to multiplicative constants, for $\alpha=1$, Jack polynomials coincide with Schur polynomials.
Over the time, it has been shown that several
results concerning Schur polynomials can be generalized in a rather
natural way to Jack polynomials (Section (VI,10) of I.G.~Macdonald's book \cite{Macdonald1995}
gives a few results of this kind).\pagebreak

One of the most surprising features of Jack polynomials is that
they have several equivalent classical definitions,
but none of them makes obvious
the fact that the coefficients of their expansion on the monomial basis
are polynomials in $\alpha$
(by construction, they are only rational functions).
%
This property has been established by Lapointe and Vinet \cite{LapointeVinetJack}.
One of the result of this paper is a new proof of
Lapointe-Vinet theorem.
\begin{theorem}[Lapointe and Vinet \cite{LapointeVinetJack}]
    The coefficients of the expansion of Jack polynomials in the monomial
    basis are polynomials in $\alpha$.
    \label{ThmCoefJackPoly}
\end{theorem}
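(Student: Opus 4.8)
The plan is to bypass the various classical definitions of Jack polynomials in favour of the one best suited to tracking the $\alpha$-dependence: the characterisation of $J^{(\alpha)}_\lambda$ as an eigenvector of a differential operator. Concretely, I would work with the Laplace--Beltrami (Sekiguchi--Debiard) operator
\[
D_\alpha \;=\; \frac{\alpha}{2}\sum_i x_i^2\,\frac{\partial^2}{\partial x_i^2}
\;+\;\sum_{i<j}\frac{x_i x_j}{x_i-x_j}\left(\frac{\partial}{\partial x_i}-\frac{\partial}{\partial x_j}\right),
\]
acting on symmetric functions of a fixed degree $n$. Three facts drive the argument. First, in the monomial basis $D_\alpha$ is triangular for the dominance order: $D_\alpha m_\mu = \varepsilon_\mu(\alpha)\,m_\mu + \sum_{\nu<\mu} d_{\mu\nu}\,m_\nu$. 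Second, since the second-order term is diagonal on each monomial and carries all of the $\alpha$, the eigenvalues $\varepsilon_\mu(\alpha)=\alpha\,n(\mu')-n(\mu)$ are \emph{affine} in $\alpha$, while the off-diagonal entries $d_{\mu\nu}$ ($\nu<\mu$) are integers independent of $\alpha$. Third, a dominance step strictly increases $n(\mu')=\sum_i\binom{\mu_i}{2}$, so for $\mu<\lambda$ the $\alpha$-coefficient of $\varepsilon_\lambda-\varepsilon_\mu$ is positive; in particular $\varepsilon_\lambda-\varepsilon_\mu\neq 0$ as a polynomial, which makes $J^{(\alpha)}_\lambda$ the \emph{unique} eigenvector of eigenvalue $\varepsilon_\lambda$.

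First I would fix the monic normalisation $P^{(\alpha)}_\lambda = m_\lambda + \sum_{\mu<\lambda} c_{\lambda\mu}(\alpha)\,m_\mu$ and extract from $(D_\alpha-\varepsilon_\lambda)P^{(\alpha)}_\lambda=0$, by reading off the coefficient of $m_\nu$, the downward recursion
\[
c_{\lambda\nu}(\alpha)\;=\;\frac{1}{\varepsilon_\lambda(\alpha)-\varepsilon_\nu(\alpha)}\sum_{\nu<\mu\le\lambda} c_{\lambda\mu}(\alpha)\,d_{\mu\nu}.
\]
Because the $d_{\mu\nu}$ are constants and each division is by a nonzero polynomial, an immediate induction along the dominance order shows that every $c_{\lambda\mu}(\alpha)$ is a \emph{rational} function of $\alpha$ whose denominator divides a product of eigenvalue differences $\prod_i(\varepsilon_\lambda-\varepsilon_{\nu_i})$ taken along saturated chains from $\nu$ to $\lambda$. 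This already reproves the weaker, classical statement that a fixed-normalisation Jack polynomial depends rationally on $\alpha$.

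The genuine content of Theorem~\ref{ThmCoefJackPoly} is the passage from \emph{rational} to \emph{polynomial}, which forces the integral normalisation $J^{(\alpha)}_\lambda=\big(\prod_{s\in\lambda}(\alpha a(s)+l(s)+1)\big)\,P^{(\alpha)}_\lambda$, where $a(s),l(s)$ are the arm and leg lengths of the cell $s$. The roots of each difference $\varepsilon_\lambda-\varepsilon_\nu$ sit at negative rationals $\alpha=(n(\lambda)-n(\nu))/(n(\lambda')-n(\nu'))$, and so do the roots $-(l(s)+1)/a(s)$ of the normalising factors; the theorem amounts to the assertion that these linear factors suffice to clear every denominator produced by the recursion. \textbf{This cancellation is the main obstacle.} I would attack it by localising at a candidate pole $\alpha_0<0$: it suffices to bound the order of the pole of $c_{\lambda\nu}(\alpha)$ at $\alpha_0$ by the order of the zero of $\prod_{s}(\alpha a(s)+l(s)+1)$ there. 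Controlling the pole order is delicate precisely because one value $\alpha_0$ may be a root of several distinct factors $\varepsilon_\lambda-\varepsilon_{\nu_i}$ along a chain, so the naive chain estimate over-counts.

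The clean way to obtain a \emph{uniform} bound, and the route I expect to carry the real weight of the proof, is to build $J^{(\alpha)}_\lambda$ not by the eigenvalue recursion but by creation operators in the spirit of Lapointe--Vinet: one exhibits explicitly polynomial-in-$\alpha$ operators $B_k^{(\alpha)}$ whose iterated application to $J^{(\alpha)}_{(1^n)}$ produces $J^{(\alpha)}_\lambda$. Since each $B_k^{(\alpha)}$ preserves the ring of symmetric functions with coefficients in $\Z[\alpha]$, polynomiality of the coefficients becomes manifest and the termwise denominator-clearing is bypassed rather than verified. The difficulty is thereby relocated to constructing these operators and checking that they indeed generate the correct integral Jack polynomials; this construction, rather than the elementary triangular system, is where I expect the essential work to lie.
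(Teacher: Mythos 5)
Your proposal does not close the gap it correctly identifies. The triangular-system argument with the Sekiguchi--Debiard operator yields only that the coefficients $c_{\lambda\mu}(\alpha)$ are rational functions of $\alpha$ with denominators dividing products of eigenvalue differences --- a classical fact, as you note. Everything after that is a statement of intent: the localisation-at-a-pole strategy is abandoned because of the over-counting problem you yourself flag (a single $\alpha_0$ can be a root of several eigenvalue differences along a chain, and nothing in the proposal bounds the resulting pole order by the multiplicity of $\alpha_0$ in $\prod_s(\alpha a(s)+l(s)+1)$), and the fallback is ``build creation operators $B_k^{(\alpha)}$ in the spirit of Lapointe--Vinet.'' But constructing those operators and proving they generate the integral Jack polynomials \emph{is} the Lapointe--Vinet proof; deferring to it means the proposal reduces to citing the theorem being proved rather than establishing it. As written, no step of the argument actually passes from rational to polynomial.

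For contrast, the paper's proof goes through an entirely different, ``dual'' route and never touches the eigenvalue recursion. It combines two ingredients: Theorem \ref{ThMain}, which asserts that the coefficients $a^\mu_\rho$ of the expansion of the Jack character $\Ch_\mu$ in the products $M_\rho$ of anisotropic moments are polynomials in $\gamma=(1-\alpha)/\sqrt{\alpha}$ with controlled degree and parity; and Lemma \ref{LemMkPol}, an elementary induction on $|\lambda|$ (adding one box at a time and tracking the multiset $\OO_\lambda-\II_\lambda$) showing that $\sqrt{\alpha}^{\,k-2}M_k(\lambda)$ is a polynomial in $\alpha$ with integer coefficients. Writing $z_\mu\theta_\mu(\lambda)=\alpha^{(|\mu|-\ell(\mu))/2}\Ch_\mu(\lambda)$ and distributing the powers of $\sqrt{\alpha}$ over the two factors (which is exactly what the degree and parity bounds of Theorem \ref{ThMain} permit) exhibits $\theta_\mu(\lambda)$ as a polynomial in $\alpha$; polynomiality in the monomial basis follows since the transition from power sums to monomials is by constants independent of $\alpha$. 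If you want to salvage your approach as a genuinely new proof, you would need to supply the uniform pole-order bound yourself rather than outsource it; otherwise the honest conclusion of your analysis is that the operator-theoretic route leads back to the original argument.
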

This theorem is proved in Section \ref{SubsectCoefJackPoly}.
We believe that this new proof is interesting in itself,
because it relies on a very different approach to Jack polynomials.

To be comprehensive on the subject, let us mention that the coefficients
of these polynomials are in fact non-negative integers.
This result had been conjectured by R. Stanley and I. Macdonald;
see {\em e.g.} \cite[VI, equation (10.26?)]{Macdonald1995}.
It was proved by Knop and Sahi \cite{KnopSahiCombinatoricsJack},
shortly after Lapointe-Vinet's paper.
We are unfortunately unable to prove this stronger result with our methods.

\subsection{Dual approach}
We will later define \emph{Jack character} to be equal (up to some simple normalization constant) 
to the coefficient $[p_\mu] J_\lambda$ in the expansion of the Jack polynomial $J_\lambda$ in the basis of power-sum symmetric functions.
The idea of the {\em dual approach} is to consider Jack characters
as a function of $\lambda$ and not as a function of $\mu$ as usual.
In more concrete words, we would like to express
the Jack character as a sum of some quantities depending on $\lambda$ over some
combinatorial set depending on $\mu$
(in Knop-Sahi's result, it is roughly the opposite). 

Inspired by the case $\alpha=1$ (which corresponds to the usual characters of the symmetric groups), 
Lassalle \cite{Lassalle2009} suggested  
to express Jack characters in terms of, so called, free cumulants
of the transition measure of the Young diagram $\lambda$.
This expression, called \emph{Kerov polynomials for Jack characters},
involves rational functions in $\alpha$, which are
conjecturally polynomials with non-negative integer coefficients in $\alpha$ and
$\beta=1-\alpha$ (see 
\cite[Conjecture 1.2]{Lassalle2009});
we refer to this as Lassalle's conjecture.
This suggests the existence of a combinatorial interpretation.
A result of this type holds true in the case $\alpha=1$,
see \cite{NousKerovExplicitInterpretation}.

In this paper, we prove a part of Lassalle's conjecture,
that is the polynomiality in $\alpha$
(but neither the non-negativity, nor the integrity) of the coefficients.

\begin{theorem}
The coefficients of Kerov polynomials for Jack characters are polynomials
in $\alpha$ with rational coefficients.
    \label{ThmPol}
\end{theorem}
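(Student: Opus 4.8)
The plan is to realize the Jack character $\Ch_\mu^{(\alpha)}$ as an element of the algebra $\Pola$ of ($\alpha$-)polynomial functions on Young diagrams, in which the free cumulants $R_2, R_3, \dots$ (of the $\alpha$-anisotropic diagram) are algebraically independent generators. The Kerov polynomial $K_\mu$ is then defined by the unique expansion $\Ch_\mu^{(\alpha)} = \sum_\pi c_\pi^{(\alpha)}\, R_\pi$, where $R_\pi = \prod_i R_{\pi_i}$ ranges over a finite set of partitions controlled by the gradation $\deg R_k = k$. A priori the coefficients $c_\pi^{(\alpha)}$ lie only in $\QQ(\alpha)$, and the content of the theorem is that they in fact lie in $\QQ[\alpha]$. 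I would extract these coefficients by evaluating on a sufficiently rich family of diagrams, and the right family for this purpose is that of multirectangular diagrams, parametrized by coordinates $\pp=(p_1,\dots,p_r)$ and $\qq=(q_1,\dots,q_r)$.

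First I would establish two polynomiality statements on this family. (a) The character $\Ch_\mu^{(\alpha)}$, evaluated on the anisotropically scaled multirectangular diagram, is a polynomial in $p_1,\dots,p_r,q_1,\dots,q_r$ and $\alpha$, with controlled degree in $\alpha$. (b) Each free cumulant $R_k$, evaluated on the same family, is likewise a polynomial in $\pp$, $\qq$ and $\alpha$, and the family $(R_k)_{k\geq 2}$ has a triangular leading structure with respect to the gradation, with nonzero leading coefficients. Statement (b) is essentially combinatorial and follows from the explicit description of free cumulants of multirectangular shapes. Statement (a) is the substantive input: it is precisely the prescribed-degree polynomiality in $\alpha$ announced in the introduction.

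With (a) and (b) in hand, the extraction is linear algebra. Substituting the multirectangular coordinates into $\Ch_\mu^{(\alpha)} = \sum_\pi c_\pi^{(\alpha)} R_\pi$ and matching coefficients of the monomials in $\pp,\qq$ (taking the number of rectangles large enough that the relevant products $R_\pi$ are linearly independent as polynomials in $\pp,\qq$) yields a triangular linear system for the $c_\pi^{(\alpha)}$, whose diagonal entries are the leading coefficients of the free cumulants. Solving it expresses each $c_\pi^{(\alpha)}$ as a rational function of $\alpha$ whose only possible denominators are those leading coefficients; since the latter are monomials in $\alpha$, the $c_\pi^{(\alpha)}$ are a priori Laurent polynomials in $\alpha$. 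A final valuation argument, using the regularity of the Jack character together with the degree control from statement (a), rules out negative powers and gives $c_\pi^{(\alpha)}\in\QQ[\alpha]$.

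The hard part will be statement (a). Although the ingredients entering the Jack character are only rational functions of $\alpha$ (as already noted for the coefficients of Jack polynomials), one must show that they assemble into a genuine polynomial in $\alpha$ of the claimed degree. I would argue by induction on $|\mu|$, using Lassalle's deformation of the Pieri and derivation operators to pass from $\Ch_\mu$ to combinations of $\Ch_{\mu'}$ for smaller $\mu'$ while tracking the degree in $\alpha$, and exploiting the multirectangular setting to keep the dependence on $\lambda$ polynomial throughout; the delicate point is to prove that the denominators in $\alpha$ produced at each inductive step cancel. This is also what yields the new proof of Theorem \ref{ThmCoefJackPoly}: polynomiality in $\alpha$ of the Jack characters is, up to normalization, polynomiality of the coefficients of $J_\lambda^{(\alpha)}$ in the power-sum basis, which transfers to the monomial basis through the $\alpha$-independent transition matrix between these two bases.
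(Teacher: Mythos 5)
Your reduction has the right general shape, but it contains a genuine gap at exactly the point where the theorem lives. Your statement (a) --- that $\Ch_\mu^{(\alpha)}$ evaluated on anisotropically scaled multirectangular diagrams is a polynomial in $\pp,\qq$ and $\alpha$ with controlled degree --- is not an input you may take for granted: modulo the elementary polynomiality of the free cumulants on such diagrams, it is essentially equivalent to the statement you are trying to prove. You acknowledge that it is ``the substantive input'' and ``the hard part,'' but the plan you give for it (induction on $|\mu|$ via Lassalle's deformed Pieri and derivation operators, with ``the delicate point'' being that the denominators in $\alpha$ produced at each step cancel) never identifies a mechanism forcing that cancellation. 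That cancellation \emph{is} the content of the theorem; asserting that it happens is not a proof. The paper's argument supplies precisely the missing mechanism: it runs Lassalle's induction on $|\mu|$, but writes the unknown coefficients with respect to the moment basis $M_\rho$ rather than the free cumulants, observes that the resulting overdetermined linear system contains a \emph{triangular} subsystem (the authors explicitly note that this triangularity fails for the $R_\rho$-expansion $K_\mu$, which is why they pass through $L_\mu$), and checks that with their normalization the diagonal entries of that subsystem are independent of $\gamma=(1-\alpha)/\sqrt{\alpha}$. Because those diagonal entries are nonzero constants, solving the system never introduces denominators, and polynomiality in $\gamma$ (hence, after the rewriting step, in $\alpha$) propagates through the induction. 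Nothing in your proposal plays the role of this $\gamma$-free diagonal.

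A secondary soft spot: even granting (a) and (b), your extraction step leaves you with Laurent polynomials in $\sqrt{\alpha}$ (the scaling $T_{\sqrt{\alpha},\sqrt{\alpha}^{-1}}$ turns $\pp,\qq$-monomials into monomials times powers of $\sqrt{\alpha}^{\pm1}$), and the ``final valuation argument'' meant to kill the negative powers is only named, not given; in the paper the analogous bookkeeping is what produces the degree and parity bounds of Theorem \ref{ThMain}, and it requires tracking degrees through every equation of the linear system rather than a one-line regularity appeal. The same issue undercuts your last paragraph on Theorem \ref{ThmCoefJackPoly}: the passage from $\Ch_\mu$ back to $\theta_\mu$ reintroduces the factor $\alpha^{(|\mu|-\ell(\mu))/2}$, and the paper needs both the precise degree bound of Theorem \ref{ThMain} and Lemma \ref{LemMkPol} to see that the half-integer powers recombine into a polynomial in $\alpha$. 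So the architecture of your argument is salvageable, and multirectangular coordinates are a legitimate alternative device, but as written the proof assumes its hardest step.
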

This theorem with a precise bound on the degree of these polynomials
is stated in Section \ref{SubsectMainResult}.
In this extended abstract, we only give the guidelines of the proof.

\subsection{Applications}
Our bounds for degrees of coefficients of 
Kerov polynomials for Jack characters imply in particular
that some coefficients (corresponding to the leading term for some gradation)
are independent on $\alpha$.
In Section \ref{SectRandomYD}, we use this simple remark to describe asymptotically
the shape of random Young diagrams whose distribution is a deformation of Plancherel
measure.

Another consequence of our results
is a uniform proof of the polynomiality
of structure constants of several meaningful algebras.
This allows us to solve some conjectures of Matsumoto \cite{MatsumotoOddJM}
and to give a partial answer to the Matching-Jack conjecture
of I. Goulden and D. Jackson \cite{GouldenJacksonMatchingJack}.
Due to the lack of space, we will not present these results in this
extended abstract.
They can be found in \cite[Section 4]{FullPaper}.\pagebreak


{\em Outline of the paper.}
Section \ref{SectDefKerov} gives all
necessary definitions and background; in particular we recall the notions of
free cumulants and Kerov polynomials.
In Section \ref{SectPolynomial} we sketch the proof of
Theorem \ref{ThmPol} and Theorem \ref{ThmCoefJackPoly}.
Finally, we state and prove our results on large Young diagrams in Section
\ref{SectRandomYD}.


\section{Jack characters and Kerov polynomials}
\label{SectDefKerov}

\subsection{Partitions and symmetric functions}
We begin with a few classical definitions and notations.

A {\em partition} $\lambda$ of $n$ (denote it by $\lambda \vdash n$)
is a non-increasing
list $(\lambda_1,\dots,\lambda_\ell)$ of positive integers of sum equal to $n$.
Then $n$ is called the {\em size} of $\lambda$ and denoted by $|\lambda|$
and the number $\ell$ is the {\em length} of the partition (denoted by $\ell(\lambda)$).

We also consider the graded ring of symmetric functions $\Sym$.
Recall that its homogeneous component $\Sym_n$ 
of degree $n$ admits several classical
bases: the monomials $(m_\lambda)_{\lambda \vdash n}$,
the power-sums $(p_\lambda)_{\lambda \vdash n}$, each indexed
by partitions of $n$.
All the definitions can be found in \cite[Chapter I]{Macdonald1995}.

Jack polynomials are symmetric functions indexed by partitions and depending on a parameter $(\alpha)$.
There exist several normalizations for Jack polynomials in the literature.
We shall work with the one denoted by $J$ in the book of   
Macdonald \cite[VI, (10.22)]{Macdonald1995} and use the same notation as he does.
For a fixed value of the parameter $\alpha$, the family 
$(J^{(\alpha)}_\lambda)_{\lambda \vdash n}$ forms a basis of $\Sym_n$.

\subsection{Jack characters} 


As power-sum symmetric functions $(p_\rho)_{\rho \vdash n}$
form a basis of $\Sym_n$,
we can expand the Jack polynomial $J_\lambda^{(\alpha)}$ in that base.
For $\lambda \vdash n$,
there exist (unique) coefficients $\theta_{\rho}^{(\alpha)}(\lambda)$ such that
\begin{equation} 
\label{eq:jack-characters}
J_\lambda^{(\alpha)}=\sum_{\substack{\rho: \\
|\rho|=|\lambda|}} 
\theta_{\rho}^{(\alpha)}(\lambda)\ p_{\rho}. 
\end{equation}
Then we can define \emph{Jack characters} by the formula:
\begin{displaymath}
\label{eq:definition-character}
\Ch_{\mu}^{(\alpha)}(\lambda)=
\alpha^{-\frac{|\mu|-\ell(\mu)}{2}}
\binom{|\lambda|-|\mu|+m_1(\mu)}{m_1(\mu)}
\ z_\mu \ \theta^{(\alpha)}_{\mu,1^{|\lambda|-|\mu|}}(\lambda),
\end{displaymath}
where $m_i(\mu)$ denotes the multiplicity of $i$ in the partition $\mu$ and
$z_\mu = \mu_1 \mu_2 \cdots \ m_1(\mu)!\ m_2(\mu)! \cdots.$

In the case $\alpha=1$, Jack polynomials correspond,
up to some normalization constants, to Schur symmetric functions.
The coefficients of the latter in the basis of the power-sum symmetric functions
are known to be equal to
the irreducible characters of the symmetric groups;
see \cite[Section I,7]{Macdonald1995}
(this explains the name {\em characters} in the general case,
even if, except for $\alpha=1/2,1,2$,
these quantities have no known representation-theoretical interpretation). 
It means that Jack characters with parameter $\alpha=1$
correspond, up to some numerical factors, 
to character values of the symmetric groups.

This normalization corresponds in fact to the one used
by Kerov and Olshanski in \cite{KerovOlshanskiPolFunc}.
These {\em normalized characters}
-- following the denomination of Kerov and Olshanski --
have plenty of interesting properties; for example
when considered as functions on the set of Young diagrams 
$\lambda\mapsto\Ch^{(1)}_{\mu}(\lambda)$, they form a linear basis (when
$\mu$ runs over the set of all partitions) of the algebra $\Lambda^\star$ of
shifted symmetric functions
, which is very rich in structure.

Jack characters have been first considered by M.~Lassalle in
\cite{Lassalle2008a}. 
Note that the normalization used here is different that the 
one of these papers.
The reason of this new choice of normalization will be clear
later.

\subsection{Generalized Young diagrams and Kerov interlacing coordinates}
\label{SubsectGeneralizedYD}

In this section, we will see different ways of representing Young diagrams and even more general objects related to them.
Let us consider a zigzag line $L$ going from a point $(0,y)$ on the $y$-axis to
a point $(x,0)$ on the $x$-axis.
We assume that every piece is either an horizontal segment from left to right or
a vertical segment from top to bottom.
A Young diagram can be seen as such a zigzag line: just consider its border.
Therefore, we call these zigzag lines \emph{generalized Young diagrams}.
\begin{figure}[tb]
    \begin{minipage}[b]{.6\linewidth}
    \[ \begin{array}{c}
        \begin{tikzpicture}[scale=0.6]
            \draw[dashed,gray] grid (4.3,3.4);
            \draw[->,thick] (-0.2,0) -- (4.7,0);
            \draw[->,thick] (0,-0.2) -- (0,3.7);
            \draw[ultra thick] (0,2) 
            -- (.5,2) 
            -- (.5,.5) 
            -- (3,.5) 
            -- (3,0) ;
        \end{tikzpicture}\\
        \lambda
    \end{array}
    \mapsto 
     \begin{array}{c}
        \begin{tikzpicture}[scale=0.6]

            \draw[dashed,gray] grid (7.2,3.5);
            \draw[->,thick] (-0.2,0) -- (7.7,0);
            \draw[->,thick] (0,-0.2) -- (0,3.7);

            \begin{scope}[xscale=2,yscale=.5]
                        \draw[ultra thick] (0,2) 
                        -- (.5,2) 
                        -- (.5,.5) 
                        -- (3,.5) 
                        -- (3,0) ;
            \end{scope}
        \end{tikzpicture}\\
        T_{2,\frac{1}{2}} (\lambda)
    \end{array}\]
    \caption{Example of stretched Young diagram.}
    \label{FigStretching}
\end{minipage}\hfill
\begin{minipage}[b]{.38\linewidth}
    \begin{center}
        \begin{tikzpicture}[scale=.6]
            \draw[dashed,gray] grid  (4.5,3.5);
            \draw[->,thick] (-.2,0) -- (4.7,0);
            \draw[->,thick] (0,-.2) -- (0,3.7);
            \draw (0,2)   node[above left,fill=white]   {\tiny $o_1=-2$};  
            \draw (.5,2)  node[above right, fill=white] {\tiny $i_1=-1.5$}; 
            \draw (.5,.5) node[above right, fill=white] {\tiny $o_2=0$};
            \draw (3,.5)  node[above right, fill=white] {\tiny $i_2=2.5$};
            \draw (3,0)   node[below right, fill=white] {\tiny $o_3=3$}; 
            \draw[ultra thick] 
                    (0,2) circle (1.5pt) --
                    (.5,2)  circle (1.5pt) --
                    (.5,.5) circle (1.5pt) --
                    (3,.5)  circle (1.5pt) --
                    (3,0)   circle (1.5pt);
        \end{tikzpicture}
    \end{center}
    \caption{A generalized Young diagram $L$ with the corresponding set $\OO_L$ and $\II_L$.}
    \label{FigGeneralizedYD}
\end{minipage}
\end{figure}

We will be in particular interested in the following generalized Young diagrams.
Let $\lambda$ be a (generalized) Young diagram and $s$ and $t$ two positive real numbers.
We denote by $T_{s,t}(\lambda)$ the broken line obtained by stretching $\lambda$
horizontally by a factor $s$ and vertically by a factor $t$ (see Figure~\ref{FigStretching}; we use french convention to draw Young diagrams).
These \emph{anisotropic} Young diagrams have been introduced by S.~Kerov in \cite{KerovAnisotropicYD}.
In the special case $s=t$, we denote by $D_s(\lambda)=T_{s,s}(\lambda)$ the
{\em dilated} Young diagram.

The content of a point of a plane is 
the difference of its $x$-coordinate and its $y$-coordinate.
We denote by $\OO_L$ the sets of contents of the \emph{outer corners} of $L$, that is
corners which are points of $L$ connecting a horizontal line on the left with vertical line on the bottom.
Similarly, the set $\II_L$ is defined as the contents of the \emph{inner corners}, that is corners 
which are points of $L$ connecting a horizontal line on the right with vertical line above.
An example is given on Figure~\ref{FigGeneralizedYD}.
The denomination {\em inner/outer} may seem strange,
but it refers to the fact that the box in the corner is inside or 
outside the diagram.

A generalized Young diagram can also be seen as a function on the real line.
Indeed, if one rotates the zigzag line counterclockwise by $45\degree$
and scale it by a factor $\sqrt{2}$ (so that the new $x$-coordinate corresponds
to contents), then it can
be seen as the graph of a piecewise affine continuous function with slope $\pm 1$.
We denote this function by $\omega(\lambda)$.
Therefore, we shall call {\em continuous Young diagram}
a Lipshitz continuous function $\omega$ with Lipshitz constant $1$
such that $\omega(x)=|x|$ for $|x|$ big enough .
This notion will be used in Section \ref{SectRandomYD}
to describe the limit shape of Young diagrams.

\subsection{Polynomial functions on the set of Young diagrams}
\label{subsec:polfunct}

If $k$ is a positive integer, one can consider the power sum symmetric function
$p_k$, evaluated on the difference of alphabets $\OO_L -\II_L$.
By definition, it is a function on generalized Young diagrams given by:
\[L \mapsto p_k(\OO_L - \II_L):= \sum_{o \in \OO_L} o^k - 
\sum_{i \in \II_L} i^k.\]
As any symmetric function can be written (uniquely) in terms of $p_k$,
we can define $f(\OO_L - \II_L)$ for any symmetric function $f$ as
follows.
Expand $f$ on the power-sum basis 
$f=\sum_{\rho} a_\rho p_{\rho_1} \cdots p_{\rho_\ell}$ 
for some family of scalars $(a_\rho)$ indexed by partitions.
Then, by definition
\[f(\OO_L - \II_L)=\sum_{\rho \text{ partition}} a_\rho p_{\rho_1}(\OO_L - \II_L)
\cdots p_{\rho_\ell} (\OO_L - \II_L).\]
This convenient notation is classical in lambda-ring calculus.

Consider the set of functions $\{\lambda \mapsto f(\OO_L - \II_L)\}$,
where $f$ describes the set of symmetric functions.
This is a subalgebra of the algebra of functions on the set of all Young diagrams.
Following S.~Kerov and G.~Olshanski, we shall call it
the algebra of {\em polynomial functions} and denote it by $\Lambda^\star$.

V.~Ivanov and G.~Olshanski \cite[Corollary 2.8]{IvanovOlshanski2002}
have shown that the normalized characters
($\lambda \mapsto \Ch^{(1)}_\mu(\lambda))_\mu$
form a linear basis of this algebra and
$(\lambda \mapsto p_k(\OO_\lambda - \II_\lambda))_{k \geq 2}$
forms an algebraic basis of $\Lambda^\star$
(for all diagrams $\lambda$, one has $p_1(\OO_\lambda - \II_\lambda)=0$).
This algebra admits several other characterization:
for instance it corresponds to the algebra of shifted symmetric functions
(see \cite[Section 1 and 2]{IvanovOlshanski2002}).\medskip

All this has a natural extension for a general parameter $\alpha$.

We say that $F$ is an \emph{$\alpha$-polynomial function} on the set of
(generalized) Young diagrams if 
    \[\lambda \mapsto F ( T_{\sqrt{\alpha}^{-1},\sqrt{\alpha}}(\lambda) ) \]
    is a polynomial function.
    The ring of $\alpha$ polynomial functions is denoted by $\Pola$.
Then $(\lambda \mapsto \Ch^{(\alpha)}_\mu(\lambda))_\mu$ forms a linear basis of $\Pola$.
This is a consequence of a result of M. Lassalle 
\cite[Proposition 2]{Lassalle2008a}.

\begin{remark}
    Lassalle's result is in fact formulated in terms of shifted symmetric functions,
    but as mentioned above, it is proved in \cite[Section 1 and 2]{IvanovOlshanski2002}
    that they correspond to polynomial functions.
\end{remark}

\begin{fact}
    With the definitions above, it should be clear that
    polynomial functions are defined on {\em generalized} Young diagrams.
    They can in fact also be canonically extended to {\em continuous}
    Young diagrams; see \cite[Section 1.2]{Biane1998}.
    This will be useful in Section \ref{SectRandomYD}.
\end{fact}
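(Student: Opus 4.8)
The plan is to exploit the fact, already recalled above, that the functions $p_k(\OO_\lambda - \II_\lambda)$ for $k \geq 2$ form an \emph{algebraic} basis of $\Lambda^\star$: every polynomial function $F$ can be written in a unique way as a polynomial $P$ in these generators, $F = P\big(p_2(\OO - \II), p_3(\OO - \II), \dots\big)$. Consequently it is enough to extend each generator canonically to continuous Young diagrams and then to set $F(\omega) := P\big(p_2(\omega), p_3(\omega), \dots\big)$; the \emph{uniqueness} of $P$ is exactly what makes this value independent of any choice, hence canonical.

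The core of the argument is therefore an integral representation of the generators that no longer refers to the corner contents. Fix a generalized Young diagram $L$ with profile $\omega = \omega(\lambda)$. Recall that $\omega$ is piecewise affine with slopes $\pm 1$, with local minima exactly at the contents in $\OO_L$ and local maxima exactly at those in $\II_L$. In the sense of distributions one then has $\omega'' = 2\big(\sum_{o \in \OO_L}\delta_o - \sum_{i \in \II_L}\delta_i\big)$, so that for every $k \geq 2$
\begin{equation*}
p_k(\OO_L - \II_L) = \sum_{o \in \OO_L} o^k - \sum_{i \in \II_L} i^k = \frac{1}{2}\int_{\RR} x^k\, \omega''(dx).
\end{equation*}
Integrating by parts twice --- writing $\omega'(x) - \sgn(x)$ and $\omega(x) - |x|$, which are compactly supported, so that all boundary terms vanish and the contributions of $\sgn' = 2\delta_0$ drop out for $k \geq 2$ --- yields
\begin{equation*}
p_k(\OO_L - \II_L) = \frac{k(k-1)}{2}\int_{\RR} x^{k-2}\,\big(\omega(x) - |x|\big)\,dx.
\end{equation*}
The right-hand side makes sense verbatim for an arbitrary continuous Young diagram $\omega$: by definition $\omega - |\cdot|$ is continuous and vanishes outside a compact set, so the integral converges. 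We take this as the definition of $p_k(\omega)$.

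It remains to check consistency and canonicity. By construction the integral formula reproduces $p_k(\OO_L - \II_L)$ whenever $\omega$ is the profile of a generalized Young diagram, so $F(\omega) = P(p_2(\omega), \dots)$ agrees with $F(\OO_L - \II_L)$ on generalized diagrams; and since the expression of $F$ in the algebraic generators is unique, the value $F(\omega)$ depends only on $F$ and $\omega$. I expect the only genuinely technical point to be the double integration by parts, where one must handle the distributional derivative $\omega''$ and subtract $\sgn$ and $|\cdot|$ before integrating so that the integrals converge and the boundary terms vanish; everything else is formal once the algebraic-basis property is invoked.
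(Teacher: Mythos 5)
Your proposal is correct, and it supplies in full the argument that the paper itself never writes down: this Fact carries no proof in the text, only a citation to Biane's Section 1.2, and what you give is precisely the standard construction underlying that reference (it appears in this form in Ivanov--Olshanski as well). Your sign conventions check out: with the paper's definitions, outer corners are the local minima of the profile and inner corners the local maxima, so indeed $\omega'' = 2\big(\sum_{o \in \OO_L}\delta_o - \sum_{i \in \II_L}\delta_i\big)$, and the double integration by parts (after subtracting $|\cdot|$, whose second derivative $2\delta_0$ contributes nothing for $k\geq 2$) gives the correct formula $p_k(\OO_L - \II_L) = \frac{k(k-1)}{2}\int_{\RR} x^{k-2}\big(\omega(x)-|x|\big)\,dx$; one can verify it, e.g., on the one-box diagram, where both sides equal $2$ for $k=2$. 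Combined with the uniqueness of the expression of $F$ as a polynomial in the algebraic generators $(p_k)_{k\geq 2}$, this yields a well-defined extension consistent with the original definition. The only point I would sharpen concerns the word \emph{canonically}: well-definedness is a property of your particular construction, whereas canonicity should mean the extension is forced by a natural requirement. This is easy to add with what you already have: the integral formula is visibly continuous with respect to uniform convergence of profiles with uniformly bounded supports, and profiles of generalized Young diagrams are dense among continuous Young diagrams in that topology (any $1$-Lipschitz function equal to $|x|$ outside a compact set is a uniform limit of piecewise linear functions with slopes $\pm 1$ of the same kind), so your extension is the \emph{unique} continuous one. That uniqueness statement is exactly what legitimizes evaluating polynomial functions at the limit shape $\Omega$ in Section \ref{SectRandomYD}.
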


\subsection{Transition measure and free cumulants}
S.~Kerov \cite{KerovTransitionMeasure} introduced the notion
of \emph{transition measure} of a Young diagram.
This probability measure $\mu_\lambda$ associated to $\lambda$
is defined by its Cauchy transform
\[G_{\mu_\lambda}(z) = \int_\RR \frac{d\mu_\lambda(x)}{z-x} =
\frac{\prod_{i \in \II_\lambda} z-i}{\prod_{o \in \OO_\lambda} z-o}.\]
Its moments are $h_k(\OO_\lambda -\II_\lambda)$,
where $h_k$ is the complete symmetric function of degree $k$, hence they are polynomial functions on the set of Young diagrams; 
we will denote them by $M_k^{(1)}$.

In Voiculescu's free probability it is very convenient to associate to a probability measure $\mu$
a sequence of numbers $(R_k(\mu))_{k \geq 1}$ called \emph{free cumulants} 
\cite{Voiculescu1986}.
The free cumulants of the transition measure of Young diagrams appeared first in
the work of P.~Biane \cite{Biane1998} and play an important role in the asymptotic
representation theory.
As explained by M.~Lassalle (look at the case $\alpha=1$ of \cite[Section 5]{Lassalle2009}),
they can be expressed as
\[R_k^{(1)}(\lambda):= R_k(\mu_\lambda) = e_k^\star(\OO_\lambda -\II_\lambda)\]
for some homogeneous symmetric function $e_k^\star$ of degree $k$. 
Note also that $(e_k^\star)_k$ as well as complete symmetric functions $(h_k)_k$ are algebraic basis of symmetric functions and, hence
$(R_k^{(1)})_{k \geq 2}$ as well as $(M_k^{(1)})_{k \geq 2}$  are algebraic basis of ring of polynomial functions
on the set of
Young diagrams ($R_1^{(1)} = M_1^{(1)}$ is the null function).

\begin{fact}
    It is easy to see that, as $h_k$ and $e_k^\star$ are 
    homogeneous symmetric functions,
    the corresponding polynomial functions $M_k^{(1)}$ and $R_k^{(1)}$
    are compatible with dilations.
    Namely
    \[ M_k^{(1)}\big( D_s(\lambda) \big) = s^k  M_k^{(1)}\big( \lambda \big);
    \quad
     R_k^{(1)}\big( D_s(\lambda) \big) = s^k  R_k^{(1)}\big( \lambda \big).\]
\end{fact}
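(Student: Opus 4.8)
The plan is to reduce everything to the elementary observation that the interlacing contents $\OO_\lambda$ and $\II_\lambda$ scale linearly under dilation. First I would note that the dilation $D_s$ sends a point $(x,y)$ of the plane to $(sx,sy)$, so that the content $x-y$ becomes $s(x-y)$. Consequently the outer and inner corners of $D_s(\lambda)$ are precisely the corners of $\lambda$ with their contents multiplied by $s$; in the notation of Section~\ref{subsec:polfunct} this reads $\OO_{D_s(\lambda)} = s\,\OO_\lambda$ and $\II_{D_s(\lambda)} = s\,\II_\lambda$ as multisets of reals.

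The key computation is then the behaviour of the power sums. Using the defining formula $p_j(\OO_L-\II_L)=\sum_{o\in\OO_L}o^j-\sum_{i\in\II_L}i^j$, the content rescaling immediately gives
\[
p_j\big(\OO_{D_s(\lambda)}-\II_{D_s(\lambda)}\big)
= \sum_{o\in\OO_\lambda}(so)^j-\sum_{i\in\II_\lambda}(si)^j
= s^j\,p_j\big(\OO_\lambda-\II_\lambda\big),
\]
so each $p_j$ is homogeneous of degree $j$ with respect to dilations. Now I would extend this to an arbitrary \emph{homogeneous} symmetric function $f$ of degree $k$: expanding $f=\sum_{\rho\vdash k}a_\rho\,p_{\rho_1}\cdots p_{\rho_{\ell(\rho)}}$ in the power-sum basis, only partitions $\rho$ of $k$ occur, and evaluating on $\OO_{D_s(\lambda)}-\II_{D_s(\lambda)}$ each factor $p_{\rho_i}$ contributes a factor $s^{\rho_i}$. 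Since $\sum_i\rho_i=|\rho|=k$, the total prefactor is $s^k$ in every term, whence $f(\OO_{D_s(\lambda)}-\II_{D_s(\lambda)})=s^k\,f(\OO_\lambda-\II_\lambda)$.

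It remains only to apply this with $f=h_k$ and $f=e_k^\star$, which are homogeneous of degree $k$ by construction; this yields the two claimed identities $M_k^{(1)}(D_s(\lambda))=s^k M_k^{(1)}(\lambda)$ and $R_k^{(1)}(D_s(\lambda))=s^k R_k^{(1)}(\lambda)$ at once. There is essentially no obstacle here: the entire content is the homogeneity of $p_j$ under dilation together with the fact that homogeneity of a symmetric function of degree $k$ forces the total power-sum degree to be $k$ in every monomial. The only point worth stating carefully is that the substitution rule $L\mapsto f(\OO_L-\II_L)$ is defined through the power-sum expansion, so that the scaling of the $p_j$ transfers verbatim to $f$.
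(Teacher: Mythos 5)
Your argument is correct and is precisely the one the paper intends when it says the fact ``is easy to see'': the contents of the corners scale linearly under $D_s$, so $p_j(\OO_L-\II_L)$ is homogeneous of degree $j$ under dilation, and the homogeneity of $h_k$ and $e_k^\star$ forces total power-sum degree $k$ in every term of their expansions, giving the factor $s^k$. The paper offers no further detail, so your write-up simply makes the same reasoning explicit.
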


Using the relevant definitions,
the $\alpha$-anisotropic moments and free cumulants defined by
\begin{align*}
    M_k^{(\alpha)}(\lambda)&= M_k^{(1)} \left( T_{\sqrt{\alpha},\sqrt{\alpha}^{-1}} (\lambda) \right),\\
    R_k^{(\alpha)}(\lambda) &= R_k^{(1)} \left( T_{\sqrt{\alpha},\sqrt{\alpha}^{-1}} (\lambda) \right)
\end{align*}
are $\alpha$-polynomial and the families $(M_k^{(\alpha)})_{k\geq 2}$
and $(R_k^{(\alpha)})_{k\geq 2}$ are two algebraic basis
of the algebra $\Lambda^\star_{(\alpha)}$.

\subsection{Kerov polynomials}
Recall that Jack characters $\Ch^{(\alpha)}_\mu$ are $\alpha$-polynomial functions hence can be expressed in
terms of the two algebraic bases above.
\begin{defprop}\label{PropExistenceJackKerov}
    Let $\mu$ be a partition and $\alpha >0$ be a fixed real number.
    There exist unique polynomials $L_\mu^{(\alpha)}$ and $K_\mu^{(\alpha)}$
    such that, for every $\lambda$,
    \begin{align*}
        \Ch^{(\alpha)}_\mu(\lambda) &= L_\mu^{(\alpha)}\left(M_2^{(\alpha)}(\lambda),
            M_3^{(\alpha)}(\lambda),\cdots \right), \\
        \Ch^{(\alpha)}_\mu(\lambda) &= K_\mu^{(\alpha)}\left(R_2^{(\alpha)}(\lambda),
    R_3^{(\alpha)}(\lambda),\cdots \right).
\end{align*}
\end{defprop}
The polynomials $K_\mu^{(\alpha)}$ have been introduced by S.~Kerov in the case
$\alpha=1$ \cite{Kerov2000talk} and by M.~Lassalle in the general case
\cite{Lassalle2009}.
Once again, we emphasize that our normalizations are different from his.

From now on, when it does not create any confusion, we suppress the superscript
$(\alpha)$.

We present a few examples of polynomials $K_{\mu}$.
This data has been computed using the one given in \cite[page 2230]{Lassalle2009}
\begin{align*}
    K_{(1)} &= R_2, \\
    K_{(2)} &= R_3 + \gamma R_2, \\
    K_{(3)} &= R_4 + 3\gamma R_3 + (1 + 2\gamma^2)R_2, \\
    K_{(4)} &= R_5 + \gamma(6R_4 + R_2^2) + (5 + 11\gamma^2)R_3 + (7\gamma +
6\gamma^3)R_2, \\
K_{(2,2)} &= R_3^2 + 2\gamma R_3 R_2 - 4 R_4 + (\gamma^2-2) R_2^2 - 10 \gamma R_3
-(6\gamma^2+2) R_2.
\end{align*}

where we set $\gamma = \frac{1-\alpha}{\sqrt{\alpha}}$.
A few striking facts appear on these examples:
\begin{itemize}[topsep=1pt, partopsep=1pt, itemsep=1pt, parsep=1pt]
    \item All coefficients are polynomials in the auxiliary parameter $\gamma$:
        the sketch of the proof of this fact will be presented in the next section with explicit bounds on the 
        degrees.
    \item For one part partition, polynomials $K_{(r)}$ 
        have non-negative coefficients.
        We are unfortunately unable to prove this statement, which is a more
        precise version of \cite[Conjecture 1.2]{Lassalle2009}.
        A similar conjecture holds for several part partitions,
        see \cite[Conjecture 1.2]{Lassalle2009}.
\end{itemize}
\begin{remark}
    This facts explain our changes of normalization.
    In Lassalle's work, the non-negativity of the coefficients is hidden:
    he has to use two variables $\alpha$ and $\beta=1-\alpha$ and choose
    a ``natural'' way to write all quantities in terms of $\alpha$ and $\beta$.
    Using our normalizations and the parameter $\gamma$,
    the non-negativity of the coefficients appears directly.
\end{remark}

\section{Polynomiality}
\label{SectPolynomial}

\comment{
\subsection{Notations}
As in the previous sections, most of our objects are indexed by partitions of integers.
Therefore it will be useful to use some short notations for small modifications
(adding or removing a box or a part) of partitions.
We denote by $\mu \cup (r)$ (resp.~$\mu \setminus (r)$)
the partition obtained from $\mu$ by adding  (resp.~deleting)
one part equal to $r$.
We denote by $\mu_{\downarrow r}=\mu \setminus (r) \cup (r-1)$ the partition
obtained by removing one box in a row of size $r$.
The reader might wonder what $\mu \setminus (r)$ and $\mu_{\downarrow r}$ mean
if $\mu$ does not have a part equal to $r$:
we will not use these notations in this context.
Finally, if $o$ is an outer corner of $\lambda$,
we denote by $\lambda^{(o)}$ the diagram obtained from $\lambda$ by adding a box
at place $o$.
}

\subsection{Main result}\label{SubsectMainResult}
\begin{theorem}\label{ThMain}
    The coefficient of $M_\rho$ in
    Jack character polynomial $L_\mu$ is a polynomial in $\gamma$
    of degree smaller or equal to 
    \[ \min \big( |\mu|+\ell(\mu) - |\rho|, |\mu|-\ell(\mu) - (|\rho|-2\ell(\rho)) \big).\]
    Moreover, it has the same parity as the integer  $|\mu|+\ell(\mu) - |\rho|$.
    
    The same is true for the coefficient of $R_\rho$ in $K_\mu$.
\end{theorem}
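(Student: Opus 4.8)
The plan is to establish the polynomiality and degree/parity bounds by a double induction: on the size of $\mu$ (or on $|\mu|+\ell(\mu)$) and, within a fixed $\mu$, by exploiting a recursive structure relating $\Ch^{(\alpha)}_\mu$ to characters of smaller partitions. The natural tool is a differential/derivation operator on the algebra $\Pola$ that captures the action of "removing a box'' and that interacts well with the free cumulant basis. Concretely, I would look for an operator $\mathcal{D}$ (the analogue of the operator used by Biane and by Dołęga--Féray--Śniady in the $\alpha=1$ case) such that applying $\mathcal{D}$ to $\Ch^{(\alpha)}_\mu$ produces a combination of lower-order characters, and such that on the generators $R_k^{(\alpha)}$ it acts in a controlled way, lowering the gradation by a fixed amount. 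Tracking how $\mathcal{D}$ changes the $\gamma$-degree of coefficients at each step is what will feed the induction.

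\textbf{Setup and gradation.} First I would fix the relevant gradation: assign to $R_k^{(\alpha)}$ the weight $k$ and to the monomial $R_\rho=R_{\rho_1}\cdots R_{\rho_{\ell(\rho)}}$ the weight $|\rho|$. The top-degree part of $K_\mu$ is known (it is $R_{|\mu|+1}$-type, matching the $\alpha=1$ picture), so the content of the theorem is entirely about the \emph{subleading} coefficients and their dependence on $\gamma$. I would phrase the target cleanly: writing $K_\mu=\sum_\rho c_{\mu,\rho}(\gamma)\,R_\rho$, I must show $c_{\mu,\rho}$ is a polynomial in $\gamma$ with $\deg c_{\mu,\rho}\le \min(|\mu|+\ell(\mu)-|\rho|,\ |\mu|-\ell(\mu)-(|\rho|-2\ell(\rho)))$ and parity $\equiv |\mu|+\ell(\mu)-|\rho| \pmod 2$, and likewise for $L_\mu$ in the $M$-basis. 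The equivalence of the $M$-version and the $R$-version should follow from the (polynomial, $\gamma$-controlled) change of basis between $(M_k^{(\alpha)})$ and $(R_k^{(\alpha)})$, so I would first reduce one case to the other by analyzing that transition and checking it respects both bounds and the parity.

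\textbf{The inductive engine.} The heart of the argument is a recursion for Jack characters expressed through such a derivation operator, together with the action of multiplication by $R_2^{(\alpha)}$ (which governs the $\lambda$-size dependence). At each inductive step one passes from $\mu$ to strictly smaller partitions, and the operator contributes at most a fixed number of factors of $\gamma$; the crucial bookkeeping is that the two quantities in the $\min$ each behave additively/monotonically under these moves, so the bound is preserved. The parity claim should be automatic once one checks that $\gamma$ enters the recursion only in fixed-parity increments (each elementary operation either preserves or flips parity in lockstep with $|\mu|+\ell(\mu)-|\rho|$). I would verify the base case on the explicit small examples $K_{(1)},K_{(2)},K_{(3)}$ listed above, which already exhibit the correct degrees and parities in $\gamma$.

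\textbf{The main obstacle.} The genuinely hard part is producing the right recursion with \emph{explicit control of the $\gamma$-degree}, rather than mere polynomiality. Showing that some quantity is polynomial in $\alpha$ is comparatively soft, but pinning down the sharp degree bound $\min(\cdots)$ and the parity requires an honest formula for how Jack characters transform under the box-removal operator in the anisotropic setting, including how the stretching $T_{\sqrt\alpha,\sqrt\alpha^{-1}}$ interacts with the combinatorics. I expect the two competing bounds in the $\min$ to come from two genuinely different estimates (one from an "upper'' expansion, one from a "lower'' one), and the subtle step will be arguing that each coefficient simultaneously satisfies both. A secondary difficulty is controlling the change of basis $R\leftrightarrow M$ finely enough that the parity and both degree bounds survive the translation; I would handle this by establishing that the structure constants of this change of basis are themselves $\gamma$-polynomials obeying analogous bounds, so that the two formulations of the theorem are genuinely equivalent.
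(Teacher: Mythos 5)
Your plan correctly guesses the broad shape of the argument---an induction on $\mu$ driven by a box-addition/removal recursion, with bookkeeping of $\gamma$-degrees---but it leaves the load-bearing step as a placeholder, and the placeholder points in the wrong direction. The proof sketched in Section \ref{SubsectMainResult} does not construct a derivation $\mathcal{D}$ acting nicely on the free cumulants $R_k^{(\alpha)}$. It starts from Lassalle's algorithm, which produces an \emph{overdetermined linear system} expressing the coefficients of $L_\mu$ in terms of those of $L_{\mu'}$ with $|\mu'|<|\mu|$. The two facts that make the induction run are: (i) this system contains a \emph{triangular} subsystem, and (ii) with the normalization chosen here the diagonal entries of that subsystem are independent of $\gamma$, hence invertible in $\QQ[\gamma]$. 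Point (i) holds for $L_\mu$ in the moment basis and, as the paper explicitly stresses, \emph{fails} for $K_\mu$ in the free cumulant basis. Your proposal sets the recursion up on the generators $R_k^{(\alpha)}$ and treats the $M$-version as a corollary of a change of basis; the actual proof goes the other way, establishing the statement for $L_\mu$ first and transferring to $R_\rho$ afterwards. Working primarily in the $R$-basis, you would not find the triangular structure needed to solve for the coefficients without introducing denominators in $\gamma$, so even plain polynomiality---let alone the degree bound---would be out of reach along that route.

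Beyond this structural issue, the proposal defers exactly the content of the theorem. You write that ``the genuinely hard part is producing the right recursion with explicit control of the $\gamma$-degree,'' which is an accurate description of what is missing: nothing in your text establishes that a recursion of the desired form exists, that it determines the coefficients uniquely (the system being overdetermined, one must exhibit an invertible subsystem), or that each elementary step shifts the $\gamma$-degree by the precise amounts that yield the two competing bounds in the $\min$ and the parity constraint. As written this is a research plan rather than a proof, and its one concrete structural commitment (recursing on the $R_k$ generators) is the choice the paper had to avoid.
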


We do not prove this theorem in this extended abstract.
The proof is of course available in the long version of the paper \cite[Section 3]{FullPaper}.
Here, we are going to present a guidelines of this proof.

The difficulty is that the proof of the existence of the polynomials $L_\mu$ and
$K_\mu$ (Proposition \ref{PropExistenceJackKerov}) is not constructive.
However, M. Lassalle gives an algorithm to compute the polynomial $K_\mu$ \cite[Section 9]{Lassalle2009}, but his algorithm involves an induction on the size of the partition $|\mu|$.
The coefficients of $K_\mu$ are the solutions of an overdetermined linear system
involving the coefficients of $K_{\mu'}$, for some partitions $\mu'$ with $|\mu'|<|\mu|$.
His algorithm can be easily adapted to $L_\mu$ \cite[Section 3.2]{FullPaper}.

Our proof relies on this work and on the two following important facts:
\begin{itemize}[topsep=1pt, partopsep=1pt, itemsep=1pt, parsep=1pt]
\item the linear system computing the coefficients of $L_\mu$
contains a triangular subsystem (this is not true with $K_\mu$);
\item with our normalization of Jack characters and anisotropic moments,
the diagonal coefficients of this linear subsystem are independent of $\gamma$
(and hence invertible in $\QQ[\gamma]$).
\end{itemize}

The polynomiality in $\gamma$ follows from these two facts.
To obtain the bound on the degree, one has to look carefully at
the degrees of the coefficients of the linear system.\bigskip

Recall that our normalization is different from the one used by M. Lassalle.
After a simple rewriting game \cite[Section 3.6]{FullPaper},
we can see that Theorem \ref{ThMain} implies that the coefficients
of $L_\mu$ and $K_\mu$ with Lassalle's normalizations are polynomials
in $\alpha$ (that is the statement of Theorem \ref{ThmPol}). 

\subsection{Lapointe-Vinet theorem}\label{SubsectCoefJackPoly}
In this section, we prove that $\theta_\mu(\lambda)$ is a polynomial in $\alpha$.
This result was already known (see Introduction),
but in our opinion it illustrates
that Lassalle's approach to Jack polynomials is relevant.

To deduce this from the results above, one has to see how $M_k(\lambda)$
depends on $\alpha$.

\begin{lemma}
    Let $k \geq 2$ be an integer and $\lambda$ be a partition.
    Then $\sqrt{\alpha}^{k-2} M_k(\lambda)$ is a polynomial in
    $\alpha$ with integer coefficients.
    \label{LemMkPol}
\end{lemma}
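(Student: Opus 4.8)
The plan is to reduce everything to the Cauchy transform of the transition measure and to track how the rescaling $T_{\sqrt{\alpha},\sqrt{\alpha}^{-1}}$ acts on it. Write $s=\sqrt{\alpha}$ and $L=T_{s,s^{-1}}(\lambda)$, so that $M_k^{(\alpha)}(\lambda)=M_k^{(1)}(L)$ is the $k$-th moment of $\mu_L$, encoded by
\[ G_{\mu_L}(z)=\frac{\prod_{i\in\II_L}(z-i)}{\prod_{o\in\OO_L}(z-o)}=\sum_{k\geq 0}M_k^{(1)}(L)\,z^{-k-1}. \]
The corners of $L$ are the images under $T_{s,s^{-1}}$ of the corner points of $\lambda$, which have integer coordinates: an outer corner $(x_o,y_o)$ of $\lambda$ produces the content $o=s\,x_o-s^{-1}y_o$, and similarly each inner corner $(u_i,v_i)$ gives $i=s\,u_i-s^{-1}v_i$. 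The crucial observation is that after multiplying a content by $s$ the half-integer powers disappear: $s\,o=\alpha x_o-y_o=:A_o\in\Z[\alpha]$ and $s\,i=\alpha u_i-v_i=:B_i\in\Z[\alpha]$.

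First I would rescale the spectral variable by setting $z=Z/s$. Pulling the factors of $s$ out of each linear term and using $|\OO_L|-|\II_L|=1$, one gets
\[ G_{\mu_L}(Z/s)=s\,\widetilde{G}(Z),\qquad \widetilde{G}(Z):=\frac{\prod_{i}(Z-B_i)}{\prod_{o}(Z-A_o)}. \]
Here $\widetilde{G}$ is a quotient of monic polynomials with coefficients in $\Z[\alpha]$ and numerator of degree one less than the denominator, so its expansion $\widetilde{G}(Z)=\sum_{k\geq 0}\widetilde{M}_k\,Z^{-k-1}$ at infinity has all coefficients $\widetilde{M}_k\in\Z[\alpha]$ (they are integer polynomials in the $A_o$ and $B_i$). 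Comparing the two expansions of $G_{\mu_L}(Z/s)$ term by term yields $\widetilde{M}_k=s^k M_k^{(1)}(L)=(\sqrt{\alpha})^k M_k^{(\alpha)}(\lambda)$. Thus $(\sqrt{\alpha})^k M_k^{(\alpha)}(\lambda)\in\Z[\alpha]$ already, and it only remains to gain two extra powers of $s$, i.e.\ a factor $\alpha$.

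The second step is to show that $\widetilde{M}_k$ has vanishing constant term in $\alpha$ for $k\geq 1$, equivalently $\widetilde{G}(Z)|_{\alpha=0}=1/Z$. Setting $\alpha=0$ collapses the data to $A_o=-y_o$ and $B_i=-v_i$, that is, to the (negated) heights of the corners. Here I would use the combinatorial fact that, traversing the profile of $\lambda$ from top to bottom, the corners alternate between outer and inner, beginning and ending with an outer corner, and every inner corner shares its height with the outer corner immediately preceding it, while the last outer corner lies on the $x$-axis. Hence the multiset $\{-v_i\}_i$ equals $\{-y_o\}_o$ with the single element $0$ removed, all factors of $\widetilde{G}$ cancel except $Z-0$, and $\widetilde{G}(Z)|_{\alpha=0}=1/Z$, giving $\widetilde{M}_k(\alpha=0)=0$ for every $k\geq 1$.

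Combining the two steps finishes the argument: for $k\geq 2$ the polynomial $\widetilde{M}_k\in\Z[\alpha]$ has zero constant term, hence $\widetilde{M}_k/\alpha\in\Z[\alpha]$, and $\widetilde{M}_k/\alpha=(\sqrt{\alpha})^{k-2}M_k^{(\alpha)}(\lambda)$. The integrality in the first step is essentially bookkeeping once the variable is rescaled by $s$; the real content—and the step I expect to be the main obstacle—is the vanishing of the constant term, i.e.\ the degeneration $\widetilde{G}(Z)|_{\alpha=0}=1/Z$, which rests on the matching of corner heights described above.
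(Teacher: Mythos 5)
Your proof is correct, but it follows a genuinely different route from the one in the paper. The paper argues by induction on $|\lambda|$: adding one box at an outer corner $o=(x,y)$ changes the multiset $\OO_\lambda-\II_\lambda$ by $+\{x-(y+1)\}+\{x+1-y\}-\{x-y\}$, and Lassalle's explicit expansion of $M_k(\lambda^{(o)})-M_k(\lambda)$ in terms of $z_o$ and the lower $M_t(\lambda)$ shows, after multiplying through by $\sqrt{\alpha}^{\,k-2}$, that every term lies in $\Z[\alpha]$ because $\sqrt{\alpha}\,z_o=\alpha x-y$ is an integer polynomial in $\alpha$; the induction then closes immediately. You instead work non-inductively with the whole generating series: rescaling the spectral variable $z=Z/\sqrt{\alpha}$ in the Cauchy transform turns all corner contents into the integer polynomials $A_o=\alpha x_o-y_o$, $B_i=\alpha u_i-v_i$, which gives $\sqrt{\alpha}^{\,k}M_k=h_k(\{A_o\}-\{B_i\})\in\Z[\alpha]$ in one stroke, and you then gain the remaining factor of $\alpha$ from the degeneration at $\alpha=0$, where the matching of corner heights collapses $\widetilde{G}$ to $1/Z$. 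Both arguments hinge on the same elementary fact (that $\sqrt{\alpha}$ times a content is in $\Z[\alpha]$), but yours is self-contained --- it does not invoke Lassalle's Proposition~8.3 --- and it isolates cleanly where the two saved powers of $\sqrt{\alpha}$ come from, at the modest cost of the combinatorial verification that the inner-corner heights are exactly the outer-corner heights with $0$ removed (which is the standard interlacing property of the profile and is correctly stated in your argument). The paper's inductive route, by contrast, recycles a formula that is also needed elsewhere in the development, which is presumably why the authors chose it.
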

\begin{proof}
    We use induction over $|\lambda|$.
    Let $o=(x,y)$ be an outer corner of $\lambda$,
we denote by $\lambda^{(o)}$ the diagram obtained from $\lambda$ by adding a box
at place $o$.
Comparing the corner of $\lambda^{(o)}$ and $\lambda$, we get that:
\[\OO_{\lambda^{(o)}} -\II_{\lambda^{(o)}} = 
\OO_{\lambda} - \II_{\lambda} + \{x-(y+1)\} + \{x+1-y\} - \{x-y\}\]
(for readers not used to $\lambda$-ring,
this equality can be understood as equality between formal sums of elements in the set).
After dilatation, we get
\[\OO_{T_{\sqrt{\alpha},\sqrt{\alpha}^{-1}} (\lambda^{(o)})} -
\II_{T_{\sqrt{\alpha},\sqrt{\alpha}^{-1}} (\lambda^{(o)})} =
 \OO_{T_{\sqrt{\alpha},\sqrt{\alpha}^{-1}} (\lambda)} - 
 \II_{T_{\sqrt{\alpha},\sqrt{\alpha}^{-1}} (\lambda)}
 +\{z_o-\frac{1}{\sqrt{\alpha}}\} +\{z_o+\alpha\} -\{z_o\}, \]
and $z_o=\sqrt{\alpha} x- y/\sqrt{\alpha}$ is the content of the
considered corner in $T_{\sqrt{\alpha},\sqrt{\alpha}^{-1}} (\lambda)$.

    By a standard $\lambda$-ring computations
    (see \cite[Proposition 8.3]{Lassalle2009}), this yields
        \[ M_k(\lambda^{(o)})-M_k(\lambda) =
        \sum_{\substack{r\geq 1, s,t \geq 0, \\ 2r+s+t \leq k}}
    z_o^{k-2r-s-t}
    \binom{k-t-1}{2r+s-1}\binom{r+s-1}{s}
    \left( -\gamma \right)^s M_t(\lambda),\]
    which can be rewritten as
    \begin{multline*} \sqrt{\alpha}^{k-2} M_k(\lambda^{(o)})
        -\sqrt{\alpha}^{k-2} M_k(\lambda) =
        \sum_{\substack{r\geq 1, s,t \geq 0, \\ 2r+s+t \leq k}}
        \alpha^r (\sqrt{\alpha} z_o)^{k-2r-s-t} \\
    \binom{k-t-1}{2r+s-1}\binom{r+s-1}{s}
    \left( \alpha-1 \right)^s \sqrt{\alpha}^{t-2} M_t(\lambda).
\end{multline*}
But $\sqrt{\alpha} z_o = \alpha x - y$ is a polynomial
in $\alpha$ with integer coefficients.
Thus the induction is immediate.
\end{proof}

Now we write, for $\mu,\lambda \vdash n$,
\[
    z_\mu \theta_\mu(\lambda)=\alpha^{\frac{|\mu|-\ell(\mu)}{2}} \Ch_\mu(\lambda)
= \sum_\rho \alpha^{\frac{|\mu|-\ell(\mu)-(|\rho|-2\ell(\rho))}{2}} 
a^\mu_\rho \left( \prod_{i \leq \ell(\rho)} \sqrt{\alpha}^{\rho_i-2}
M_{\rho_i}(\lambda) \right).
\]
The quantities $\alpha^{\frac{|\mu|-\ell(\mu)-(|\rho|-2\ell(\rho))}{2}} 
a^\mu_\rho$ and $\sqrt{\alpha}^{\rho_i-2}
M_{\rho_i}(\lambda)$ are polynomials in $\alpha$ (by Theorem~\ref{ThMain}
and Lemma~\ref{LemMkPol}), hence $\theta_\mu(\lambda)$ is a polynomial
in $\alpha$, which proves Theorem \ref{ThmCoefJackPoly}. 

\subsection{Gradation}
\label{SubsectGrad}
Looking at Theorem \ref{ThMain} it makes natural to consider
some gradations on $\Pola$.
This structure will also be useful in the next section.\medskip

The ring $\Pola$ of $\alpha$-polynomial functions can be endowed with a
gradation by deciding that $M_k$ is a homogeneous function of degree $k$:
as $(M_k)_{k \ge 2}$ is an algebraic basis of $\Pola$,
any choice of degree for $M_k$ (for all $k\ge 2$)
defines uniquely a gradation on $\Pola$.
Then $R_k$ is also a homogeneous function of degree $k$,
thanks to the moment-free cumulant relations,
see {\it e.g.} \cite[Section 2.4]{Biane1998}.

Theorem \ref{ThMain} shows that $\Ch_\mu$ has at most degree $|\mu|+\ell(\mu)$
(this has also been proved by M.~Lassalle \cite[Proposition 9.2 (ii)]{Lassalle2009}).
Note that $\Ch_\mu$ is not homogeneous in general.
Moreover, its component of degree $|\mu|+\ell(\mu)$ does not depend on $\alpha$.
As this dominant term is known in the case $\alpha=1$
(see for example \cite[Theorem 4.9]{SniadyGenusExpansion}),
one obtains the following result (which extends \cite[Theorem 10.2]{Lassalle2009}):
\[ \Ch_\mu = \prod_{i=1}^{\ell(\mu)} R_{\mu_i+1} + \text{smaller degree terms}.\]
In particular, $\Ch_\mu$ has exactly degree $|\mu|+\ell(\mu)$.

Consider the subspace $V_{\le d} \subset \Pola$ of elements of degree less or 
equal to $d$.
Its dimension is the number of partitions $\rho$ of size less or equal to $d$ with no parts equal to $1$.
By removing $1$ from every part of $\rho$, we see that this is also the number
of partitions $\mu$ such that $|\mu|+\ell(\mu) \le d$.
But the latter index the functions $\Ch_\mu$ lying in $V_{\le d}$.
Hence,
\[ V_{\le d}= \Vect \big(\{\Ch_\mu, |\mu|+\ell(\mu) \le d\} \big) \]
and the degree of an element in $\Pola$ can be determined as follows:
\begin{equation}\label{EqDegCombLinCh}
\deg\left( \sum_\mu a_\mu \Ch_\mu \right) = 
\max_{\mu : a_\mu \neq 0} |\mu|+\ell(\mu). 
\end{equation}
\begin{remark}
    The algebra $\Pola$ admits other relevant gradations,
    see \cite[Sections 3.5 and 3.8]{FullPaper}.
\end{remark}

\section{Application: asymptotics of large Young diagrams
}\label{SectRandomYD}

We consider the following deformation of the Plancherel measure
$ \PP_n^{(\alpha)} ( \lambda ) = \frac{\alpha^n n!}{j_\lambda^{(\alpha)}},$
where $j_\lambda^{(\alpha)}$ is the following deformation of the square of the
hook products:
\begin{equation}\label{EqDefAPlancherel}
j_\lambda^{(\alpha)} = \prod_{\Box \in \lambda} 
(\alpha a(\Box) + \ell(\Box)+1) 
(\alpha a(\Box) + \ell(\Box)+\alpha).
\end{equation}
Here, $a(\Box)$ and $\ell(\Box)$ are respectively the arm and leg length
of the box as defined in \cite[Chapter I]{Macdonald1995}.
The probability measure $\PP_n^{(\alpha)}$ on Young diagrams of size $n$
has appeared recently in
several research papers \cite{
BorodinOlshanskiJackPlancherel,
FulmanFluctuationChA2,
OlshanskiJackPlancherel,MatsumotoOddJM}
and is presented as an important area of research in Okounkov's survey on 
random partitions \cite[§ 3.3]{OkounkovRandomPartitions}.
When $\alpha=1$, it specializes to the well-known Plancherel measure
for the symmetric groups.

The following property,
which corresponds to the case
$\pi=(1^n)$ in \cite[Equation (8.4)]{MatsumotoOddJM},
characterizes the Jack-Plancherel measure: 
\[\esper_{\PP_n^{(\alpha)}}( \theta_\mu^{(\alpha)}(\lambda) ) =
\delta_{\mu,1^n},\]
where $\lambda$ is a random variable distributed according to
$\PP_n^{(\alpha)}$.

Using the definition of $\Ch_\mu$ we have:
\[\esper_{\PP_n^{(\alpha)}}(\Ch_\mu)=\begin{cases}
    n(n-1)\cdots(n-k+1) & \text{if }\mu=1^k \text{ for some }k \leq n,\\
    0   & \text{otherwise.}
\end{cases}\]
As $\Ch_\mu$ is a linear basis of $\Pola$, it implies the following lemma
(which is an analogue of \cite[Theorem 5.5]{OlshanskiJackPlancherel} with
another gradation).
\begin{lemma}
    Let $F$ be an $\alpha$-polynomial function. Then
$\esper_{\PP_n^{(\alpha)}}(F)$
    is a polynomial in $n$ of degree at most $\deg(F)/2$.
    \label{LemDegEsper}
\end{lemma}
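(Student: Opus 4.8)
The plan is to expand $F$ in the linear basis $(\Ch_\mu)_\mu$ of $\Pola$ and then combine the explicit values of $\esper_{\PP_n^{(\alpha)}}(\Ch_\mu)$ with the degree formula \eqref{EqDegCombLinCh}.

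First I would write $F=\sum_\mu a_\mu \Ch_\mu$, a \emph{finite} linear combination since $(\Ch_\mu)_\mu$ is a linear basis. By linearity of the expectation,
\[\esper_{\PP_n^{(\alpha)}}(F)=\sum_\mu a_\mu\,\esper_{\PP_n^{(\alpha)}}(\Ch_\mu).\]
Only the indices $\mu=1^k$ survive, since $\esper_{\PP_n^{(\alpha)}}(\Ch_\mu)=0$ for all other $\mu$; hence
\[\esper_{\PP_n^{(\alpha)}}(F)=\sum_k a_{1^k}\,n(n-1)\cdots(n-k+1).\]
Each falling factorial $n(n-1)\cdots(n-k+1)$ is a polynomial in $n$ of degree $k$, so the right-hand side is visibly a polynomial in $n$.

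For the degree estimate I would use that falling factorials of distinct degrees are linearly independent, so no cancellation of top-degree terms can occur and the degree in $n$ of $\esper_{\PP_n^{(\alpha)}}(F)$ equals $\max\{k : a_{1^k}\neq 0\}$. Since $|1^k|+\ell(1^k)=2k$, the degree formula \eqref{EqDegCombLinCh} gives $2k\le\deg(F)$ for every $k$ with $a_{1^k}\neq 0$, and therefore $\max\{k : a_{1^k}\neq 0\}\le\deg(F)/2$, which is the asserted bound.

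The argument has essentially no hard step once the basis expansion and the two quoted facts are available. The only point deserving care is to check that the equality between $\esper_{\PP_n^{(\alpha)}}(\Ch_{1^k})$ and the polynomial $n(n-1)\cdots(n-k+1)$ holds for \emph{every} nonnegative integer $n$, including $n<k$: in that range one of the factors $n,n-1,\dots,n-k+1$ vanishes, which matches the constraint $k\le n$ appearing in the case distinction for $\esper(\Ch_{1^k})$. This is exactly what justifies treating $\esper_{\PP_n^{(\alpha)}}(F)$ as a genuine polynomial in $n$ rather than as a function agreeing with one only for large $n$.
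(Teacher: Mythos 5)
Your proof is correct and follows the same route as the paper: reduce to the basis $(\Ch_\mu)$ via equation \eqref{EqDegCombLinCh}, then read off the degree from the explicit formula for $\esper_{\PP_n^{(\alpha)}}(\Ch_{1^k})$. Your added remark that the falling factorial vanishes for $n<k$, so the formula is a genuine polynomial identity in $n$, is a worthwhile detail the paper leaves implicit.
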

\begin{proof}
    It is enough to verify this lemma on the basis $\Ch_\mu$ because
    of equation \eqref{EqDegCombLinCh}.
    But in this case $\esper_{\PP_n^{(\alpha)}}(F)$
    is explicit (see formula above) and the lemma is obvious
    (recall that $\deg(\Ch_\mu)=|\mu|+\ell(\mu)$; 
    see Section \ref{SubsectGrad}).
\end{proof}

Let $(\lambda^n)_{n \geq 1}$ be a sequence of random partitions, where
$\lambda^n$ has distribution $\PP_n^{(\alpha)}$.
In the case $\alpha=1$, it has been proved in 1977 separately by
Logan and Shepp \cite{LoganShepp1977} and Kerov and Vershik
\cite{KerovVershikLimitYD}
that, in probability, 
\begin{equation}\label{EqConvUnifPlancherelA1}
    \lim_{n \to \infty} \left\lVert \omega(D_{1/\sqrt{n}}(\lambda^n)) - \Omega \right\rVert = 0,
\end{equation}
where $\Omega$ is the limit shape given explicitly as follows:
\[\Omega(x) = \begin{cases}
    |x| & \text{if }|x| \geq 2;\\
    \frac{2}{\pi} \left( x \cdot \arcsin \frac{x}{2} + \sqrt{4-x^2} \right) 
    & \text{otherwise.}
\end{cases}\]
Recall from Section \ref{SubsectGeneralizedYD}
that $D_s(\lambda)$ is the Young diagram $\lambda$ {\em dilated} by a factor $s$
and $\omega(\lambda)$ is by definition the function whose graphical representation
is the border of $\lambda$, rotated by $45\degree$ (see Section \ref{SubsectGeneralizedYD})
and stretched by $\sqrt{2}$.

In the general $\alpha$ case, we have the following weak convergence result:

\begin{proposition} \label{PropAPlanchWeakConv}
    For any 1-polynomial function $F \in \Pol^{(1)}$, when $n$ tends to infinity, one has
    \[ F\big(T_{\sqrt{\alpha/n},1/\sqrt{n \alpha}} (\lambda^n)\big) \xrightarrow{\PP_n^{(\alpha)}} 
    F( \Omega) , \]
    where $\xrightarrow{\PP_n^{(\alpha)}}$ means convergence in probability.
\end{proposition}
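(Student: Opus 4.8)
The plan is to reduce the statement to a law of large numbers for the rescaled free cumulants, which can then be settled by a direct first- and second-moment computation based on Lemma~\ref{LemDegEsper} and the expectation formula for $\Ch_\mu$.

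First I would exploit the multiplicativity of the stretching maps to factor the scaling as
\[ T_{\sqrt{\alpha/n},1/\sqrt{n\alpha}} = D_{1/\sqrt{n}}\circ T_{\sqrt{\alpha},\sqrt{\alpha}^{-1}}. \]
Since any $F\in\Pol^{(1)}$ is a polynomial in the free cumulants $(R_k^{(1)})_{k\ge 2}$, and since convergence in probability to constants is preserved by the algebra operations (sums and products of the finitely many sequences involved), it suffices to treat $F=R_k^{(1)}$ for each fixed $k\ge 2$. Using the dilation-compatibility of $R_k^{(1)}$ together with the definition $R_k^{(\alpha)}=R_k^{(1)}\circ T_{\sqrt{\alpha},\sqrt{\alpha}^{-1}}$, the quantity to control becomes
\[ R_k^{(1)}\big(T_{\sqrt{\alpha/n},1/\sqrt{n\alpha}}(\lambda^n)\big)=n^{-k/2}\,R_k^{(\alpha)}(\lambda^n). \]
As is classical, $\Omega$ has the standard semicircle law as transition measure (see \cite{Biane1998}), so $R_k^{(1)}(\Omega)=\delta_{k,2}$, and the goal reduces to proving $n^{-k/2}R_k^{(\alpha)}(\lambda^n)\to\delta_{k,2}$ in probability.

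For the expectation I would expand $R_k^{(\alpha)}$ in the basis $(\Ch_\mu^{(\alpha)})_\mu$. By the gradation of Section~\ref{SubsectGrad}, the top-degree part of $\Ch_\mu$ equals $\prod_i R_{\mu_i+1}$, and these leading monomials are linearly independent; hence the degree-$k$ part of $R_k^{(\alpha)}$ is exactly $\Ch_{(k-1)}^{(\alpha)}$, so the coefficient of $\Ch_{1^{k/2}}^{(\alpha)}$ vanishes for even $k\ge 4$. Now $\esper_{\PPna}(\Ch_\mu^{(\alpha)})$ is nonzero only for $\mu=1^j$, where it equals $n(n-1)\cdots(n-j+1)\sim n^j$, and $\Ch_{1^j}$ has degree $2j$, so only $j$ with $2j\le k$ contribute. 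The borderline term $j=k/2$ is killed by the previous remark for $k\ge 4$ (while for $k=2$ one has simply $R_2^{(\alpha)}=\Ch_{(1)}^{(\alpha)}=n$ deterministically), whence $\esper_{\PPna}(R_k^{(\alpha)})=o(n^{k/2})$ for $k\ge 3$. Dividing by $n^{k/2}$ gives $\esper_{\PPna}(n^{-k/2}R_k^{(\alpha)})\to\delta_{k,2}$.

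The crux is the variance bound, where I would expand $(R_k^{(\alpha)})^2$, of degree $2k$, in the $\Ch$-basis. Its top-degree monomial is $R_k^2=\prod_i R_{\mu_i+1}$ for $\mu=(k-1,k-1)$, whereas the degree-$2k$ character $\Ch_{1^k}$ has top part $R_2^k$; for $k\ge 3$ these monomials are distinct, so the coefficient of $\Ch_{1^k}$ in $(R_k^{(\alpha)})^2$ is zero. Consequently the largest falling factorial occurring in $\esper_{\PPna}((R_k^{(\alpha)})^2)$ corresponds to $j\le k-1$, giving $\esper_{\PPna}((R_k^{(\alpha)})^2)=O(n^{k-1})=o(n^k)$; combined with $\esper_{\PPna}(R_k^{(\alpha)})^2=o(n^k)$ from the previous step, this yields $\Var_{\PPna}(n^{-k/2}R_k^{(\alpha)})\to 0$. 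Chebyshev's inequality then gives convergence in probability to $\delta_{k,2}=R_k^{(1)}(\Omega)$, and propagating this through the polynomial $F$ completes the argument. I expect the main obstacle to be exactly this variance estimate: it is not enough that $\esper((R_k^{(\alpha)})^2)$ has degree at most $k$ in $n$ (which is immediate from Lemma~\ref{LemDegEsper}); one must check that the leading coefficient vanishes, and this rests on the algebraic observation that $R_2^k\ne R_k^2$ for $k\ge 3$, i.e.\ that $\Ch_{1^k}$ does not occur in $(R_k^{(\alpha)})^2$. The first-moment computation is conceptually identical but easier, and the reduction to generators via the factorization of the stretching map is routine.
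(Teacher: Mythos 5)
Your proposal is correct and follows essentially the same route as the paper: reduce to the generators $R_k^{(1)}$, identify the rescaled quantity with $n^{-k/2}R_k^{(\alpha)}(\lambda^n)$, and run a first- and second-moment argument using the expansion $\prod_i R_{\mu_i+1}=\Ch_\mu+(\text{lower degree})$, the expectation formula for $\Ch_\mu$ under $\PPna$, and Lemma~\ref{LemDegEsper}. Your explicit observation that $\Ch_{1^k}$ cannot occur in $(R_k^{(\alpha)})^2$ (because $R_2^k\neq R_k^2$ for $k\geq 3$) is exactly the mechanism behind the paper's estimate $\esper_{\PPna}\bigl(\prod_i R_{\mu_i+1}\bigr)=o\bigl(n^{(|\mu|+\ell(\mu))/2}\bigr)$ for $\mu\neq 1^j$, applied to $\mu=(k-1,k-1)$.
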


\begin{proof}
    As $(R^{(1)}_k)_{k \geq 2}$ is an algebraic basis of $\Pol^{(1)}$,
    it is enough to prove the proposition for any $R^{(1)}_k$.

Let $\mu$ be partition.
As mentioned at the beginning of the section, one has:
\begin{equation}\label{EqTopDeg1}
    \prod_{i \leq \ell(\mu)} R^{(\alpha)}_{\mu_i+1} =
    \Ch_\mu + \text{ terms of degree at most }|\mu|+\ell(\mu)-1.
\end{equation}
Together with Lemma~\ref{LemDegEsper} and the formula for $\esper_{\PP_n^{(\alpha)}}(\Ch_\mu)$,
this implies:
\[ \esper_{\PP_n^{(\alpha)}} \left(
\prod_{i \leq \ell(\mu)} R^{(\alpha)}_{\mu_i+1}\right) = \begin{cases}
    n(n-1)\cdots(n-k+1) + O(n^{k-1}) & \text{if }\mu=1^k \text{ for some }k; \\
    o(n^{\frac{|\mu|+\ell(\mu)}{2}}) & \text{otherwise.}
\end{cases}\]
In particular, one has that
\begin{align*}
    \esper_{\PP_n^{(\alpha)}} \big(
        R^{(\alpha)}_k(D_{1/\sqrt{n}}(\lambda^n))\big) &=
\frac{1}{n^{k/2}}
    \esper_{\PP_n^{(\alpha)}} (R^{(\alpha)}_k) = \delta_{k,2} + O\left(\frac{1}{\sqrt{n}}\right); \\
    \Var_{\PP_n^{(\alpha)}} \big(R^{(\alpha)}_k(D_{1/\sqrt{n}}(\lambda^n))\big) &= \frac{1}{n^k}
\left( 
    \esper_{\PP_n^{(\alpha)}} \big((R^{(\alpha)}_k)^2\big) - \esper_{\PP_n^{(\alpha)}} (R^{(\alpha)}_k)^2
\right)
    = O\left(\frac{1}{n}\right).
\end{align*}
Thus, for each $k$, $R^{(\alpha)}_k(D_{1/\sqrt{n}}(\lambda^n))$ converges in probability
towards $\delta_{k,2}$. 
But, by definition 
\[R^{(\alpha)}_k(D_{1/\sqrt{n}}(\lambda^n))=R^{(1)}_k \big(T_{\sqrt{\alpha/n},1/\sqrt{n \alpha}} (\lambda^n)\big)\]
and $(\delta_{k,2})_{k \geq 2}$ is the sequence of free cumulants
of the continuous diagram $\Omega$
(see \cite[Section 3.1]{Biane2001}),
\emph{i.e.} 
$\delta_{k,2} = R_k^{(1)}(\Omega).
$
\end{proof}

Roughly speaking, Proposition \ref{PropAPlanchWeakConv} means that,
the stretched Young diagram 
$T_{\sqrt{\alpha/n},1/\sqrt{n \alpha}} (\lambda^n) $ converges weakly
towards $\Omega$ (in probability).
So this result already means that the considered diagrams admit some
limit shape.

However, 
it would be desirable to obtain a result with uniform convergence,
which is a more natural notion of convergence.
This can be done thanks to the following lemma.
\begin{lemma}\label{LemFiniteSupport}
There exists a constant $C$ such that
\[\lim_{n \to \infty} \PP \left[
 \max\left( \frac{c(\lambda^n)}{\sqrt{n}};\frac{r(\lambda^n)}{\sqrt{n}}\right)
 \le C
 \right] =1, \]
 where, for each $n$, the diagram $\lambda^n$ is chosen randomly 
 with distribution $\PPna$ and
 $r(\lambda^n)$ and $c(\lambda^n)$ are respectively
 its numbers of rows and columns.
\end{lemma}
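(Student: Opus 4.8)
The plan is to exhibit a single $\alpha$-polynomial function that bounds both the number of rows and the number of columns from above, and to control its $\PPna$-expectation through Lemma~\ref{LemDegEsper}. A convenient choice is the anisotropic power-sum observable
\[ P_k(\lambda) := p_{2k}\big( \OO_{T_{\sqrt{\alpha},\sqrt{\alpha}^{-1}}(\lambda)} - \II_{T_{\sqrt{\alpha},\sqrt{\alpha}^{-1}}(\lambda)} \big), \]
which is homogeneous of degree $2k$ for the gradation of Section~\ref{SubsectGrad}; hence, by Lemma~\ref{LemDegEsper}, $\esper_{\PPna}(P_k)$ is a polynomial in $n$ of degree at most $k$. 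Writing $\sigma_\lambda(x)=\omega(\lambda)(x)-|x|\ge 0$ for the nonnegative, compactly supported profile of the (rescaled) diagram, the distributional identity $\sum_{o}\delta_o-\sum_i\delta_i=\tfrac12\,\omega(\lambda)''$ and two integrations by parts against $x^{2k}$ give the manifestly nonnegative representation
\[ P_k(\lambda)=k(2k-1)\int_{\RR} x^{2k-2}\,\sigma_\lambda(x)\,\mathrm{d}x. \]

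First I would establish a deterministic lower bound. Since $\mu\subseteq\lambda$ implies $\omega(\mu)\le\omega(\lambda)$, hence $\sigma_\mu\le\sigma_\lambda$, I may restrict to the first row $(\lambda_1)$ and the first column $(1^{\ell(\lambda)})$ of $\lambda$. A direct computation shows that the rescaled profile of a single stretched row (resp. column) dominates a positive constant $c_\alpha$ on a content interval of length proportional to $\lambda_1$ inside $[0,\infty)$ (resp. of length proportional to $\ell(\lambda)$ inside $(-\infty,0]$). Integrating $x^{2k-2}$ over these two disjoint ranges yields
\[ P_k(\lambda)\ \ge\ C_{k,\alpha}\,\big( \lambda_1^{\,2k-1} + \ell(\lambda)^{\,2k-1}\big) \]
for an explicit $C_{k,\alpha}>0$; thus controlling $P_k$ controls $\lambda_1$ and $\ell(\lambda)$ simultaneously. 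Alternatively one could bound only one of them and recover the other from the conjugation symmetry $\PPna(\lambda')=\PP_n^{(1/\alpha)}(\lambda)$, which follows directly from \eqref{EqDefAPlancherel}.

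Combining these two facts through Markov's inequality gives, for a threshold $C\sqrt{n}$,
\[ \PP\big[ \max(\lambda_1,\ell(\lambda)) > C\sqrt{n}\big]\ \le\ \frac{\esper_{\PPna}(P_k)}{C_{k,\alpha}\,(C\sqrt{n})^{2k-1}}. \]
Here lies the main obstacle. For a \emph{fixed} $k$ the right-hand side is only of order $n^{1/2}$, which is useless: a long thin row has a large first part but a moderate value of $P_k$, so the single-row/column estimate loses exactly one power of $n$ and no fixed moment can close the gap. The remedy is to let $k=k(n)\to\infty$ slowly (for instance $k\asymp\log n$). For this to succeed one must prove that the constants do not grow faster than geometrically in $k$, i.e. a uniform estimate of the shape
\[ \esper_{\PPna}(P_k)\ \le\ B^{2k}\,n^{k}, \]
with $B=B(\alpha)$ independent of $k$, valid for $n$ large and $k$ up to order $\log n$. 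Granting this, taking $C>B$ makes the bound decay like $(B/C)^{2k}\,n^{1/2}$, which tends to $0$ as soon as $k\gtrsim\log n$, establishing the lemma. Proving the uniform bound is the real work: the leading coefficient is governed by the moments of the limit shape $\Omega$ (consistently with Proposition~\ref{PropAPlanchWeakConv}) and should grow only like $4^{k}$, while the subleading terms can be tracked using the expansion $\prod_i R^{(\alpha)}_{\mu_i+1}=\Ch_\mu+(\text{lower degree})$ together with $\esper_{\PPna}(\Ch_\mu)=n(n-1)\cdots(n-k+1)$ for $\mu=1^k$ and $0$ otherwise. I expect keeping all of these estimates uniform in the growing parameter $k$ to be the delicate point of the argument.
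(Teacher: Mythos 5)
Your deterministic lower bound is sound: the monotonicity $\sigma_\mu\le\sigma_\lambda$ for $\mu\subseteq\lambda$, the plateau of the profile of a single row/column, and the conjugation symmetry $\PPna(\lambda')=\PP_n^{(1/\alpha)}(\lambda)$ are all correct and would indeed reduce the lemma to a moment estimate. But the argument has a genuine gap exactly where you flag it, and that gap is the whole content of the lemma. Markov's inequality with a \emph{fixed} moment $P_k$ loses a factor of $\sqrt{n}$ (a diagram consisting of one row of length $n$ has $\lambda_1=n$ but $P_k$ of order $n^{2k-1}$, not $n^{2k}$, so no fixed $k$ can see the event $\lambda_1>C\sqrt{n}$), and the remedy you propose --- letting $k\asymp\log n$ --- requires the uniform bound $\esper_{\PPna}(P_k)\le B^{2k}n^k$ with $B$ independent of $k$. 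Nothing in the paper supplies this: Lemma~\ref{LemDegEsper} controls only the \emph{degree} in $n$ of $\esper_{\PPna}(F)$, not the size of its coefficients, and the lower-degree terms of the expansion $\prod_i R_{\mu_i+1}=\Ch_\mu+\cdots$ are precisely the unknown coefficients of the Kerov polynomials that Theorem~\ref{ThMain} bounds only in $\gamma$-degree, not in magnitude. Obtaining such uniform high-moment control is essentially the ``traces of high powers'' method from random matrix theory and is a substantial piece of work in its own right --- arguably harder than the statement being proved. So the proposal is a program, not a proof.

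The paper's own argument (deferred to the full version) goes a completely different and more elementary way: it works directly with the explicit product formula \eqref{EqDefAPlancherel} for $\PPna$, estimating the hook-type products to bound the probability that $\lambda^n$ has a row or column longer than $C\sqrt{n}$. This sidesteps moments entirely and is the standard route for support/edge bounds under (deformed) Plancherel measures, where weak convergence of the rescaled profile --- which is what the polynomial-function machinery gives you --- is known to be insufficient to control the tails. If you want to salvage your approach, you would either need to carry out the uniform-in-$k$ moment analysis (and justify why the coefficients grow at most geometrically), or abandon the moment method for this step and use the explicit measure as the paper does.
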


The proof of this lemma is quite technical and relies on the explicit formula 
\eqref{EqDefAPlancherel} for $\PPna$.
It can be found in \cite[Section 6.4]{FullPaper}.
We can now state the uniform convergence result.

\begin{theorem}
For each $n$, let $\lambda^n$ be a random Young diagram of size $n$
 distributed with $\alpha$-Plancherel measure. Then, in probability,
\[ \lim_{n \to \infty} \left\lVert \omega\big(T_{\sqrt{\alpha/n},1/\sqrt{n \alpha}} (\lambda^n)\big)
 - \Omega \right\rVert = 0.\]
\end{theorem}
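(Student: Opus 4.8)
The plan is to upgrade the weak convergence of Proposition~\ref{PropAPlanchWeakConv} to uniform convergence by combining it with the tightness supplied by Lemma~\ref{LemFiniteSupport}, through a soft compactness argument in the spirit of Kerov--Vershik and Biane. Write $\omega^n := \omega\big(T_{\sqrt{\alpha/n},1/\sqrt{n\alpha}}(\lambda^n)\big)$ for the rescaled random profile; since stretching preserves the horizontal and vertical segments of the border, each $\omega^n$ is a continuous Young diagram, that is a $1$-Lipschitz function with $\omega^n(x)=|x|$ outside a bounded interval. Lemma~\ref{LemFiniteSupport} guarantees that, with probability tending to $1$, this interval is contained in $[-C,C]$ for a fixed constant $C$ (which we may take $\ge 2$, so that $\Omega$ lies in the same class), because the bound on the numbers of rows and columns translates, after rotation and rescaling, into a bound on the support of the profile. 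Let $\mathcal{K}_C$ denote the set of continuous Young diagrams $\omega$ with $\omega(x)=|x|$ for $|x|\ge C$. By the Arzel\`a--Ascoli theorem, $\mathcal{K}_C$ is compact for the uniform norm, its elements being uniformly Lipschitz and coinciding with $|x|$ outside $[-C,C]$.

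The crucial deterministic step is that, on the compact set $\mathcal{K}_C$, finitely many free cumulants control the uniform distance to $\Omega$: for every $\varepsilon>0$ there exist an integer $N$ and a real $\delta>0$ such that, for all $\omega\in\mathcal{K}_C$,
\[ \max_{2\le k\le N}\big|R_k^{(1)}(\omega)-R_k^{(1)}(\Omega)\big|<\delta \implies \big\|\omega-\Omega\big\|<\varepsilon. \]
This relies on two facts. First, each free cumulant $R_k^{(1)}$ is continuous on $\mathcal{K}_C$ for the uniform topology: the moments $M_k^{(1)}$ are the moments of the transition measure of $\omega$, which depends continuously on $\omega$ once the support is fixed, and the $R_k^{(1)}$ are polynomials in the $M_k^{(1)}$. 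Second, the family $\big(R_k^{(1)}\big)_{k\ge 2}$ separates points of the set of continuous diagrams, since the free cumulants determine the transition measure and the transition measure determines $\omega$ through the Markov--Krein correspondence. Granting these, the implication follows by contradiction: a sequence in $\mathcal{K}_C$ violating it would, by compactness, admit a uniformly convergent subsequence with limit $\omega^\ast$ such that $R_k^{(1)}(\omega^\ast)=R_k^{(1)}(\Omega)$ for every $k$ while $\|\omega^\ast-\Omega\|\ge\varepsilon$, contradicting the separation property.

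It remains to assemble the probabilistic conclusion. Fix $\varepsilon>0$ and take $N,\delta$ as above. Recall from the proof of Proposition~\ref{PropAPlanchWeakConv} that $R_k^{(1)}(\omega^n)=R_k^{(\alpha)}(D_{1/\sqrt{n}}(\lambda^n))$, so applying that proposition to $F=R_k^{(1)}$ for each $k$ with $2\le k\le N$ yields $R_k^{(1)}(\omega^n)\to R_k^{(1)}(\Omega)$ in probability. As there are finitely many such $k$, a union bound gives $\PP\big[\max_{2\le k\le N}|R_k^{(1)}(\omega^n)-R_k^{(1)}(\Omega)|\ge\delta\big]\to 0$. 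Intersecting with the event $\{\omega^n\in\mathcal{K}_C\}$ of Lemma~\ref{LemFiniteSupport}, whose probability tends to $1$, the deterministic implication forces $\|\omega^n-\Omega\|<\varepsilon$ outside an event of vanishing probability. Since $\varepsilon>0$ is arbitrary, $\|\omega^n-\Omega\|\to 0$ in probability, which is the assertion.

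The main obstacle is the deterministic separation step of the second paragraph: one must check that the free cumulants are continuous and jointly separate continuous diagrams of fixed compact support. The continuity is routine once one passes to the transition measure, but the separation property---equivalently, that a compactly supported continuous diagram is reconstructible from its free cumulants---is the conceptual heart of the argument and is exactly where the Markov--Krein and transition-measure machinery of \cite{Biane1998,Biane2001} is genuinely needed. Everything else is a standard tightness-plus-separation scheme.
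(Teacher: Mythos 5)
Your argument is correct and is essentially the same as the paper's: the paper simply defers to the tightness-plus-separation scheme of \cite[Theorem 5.5]{IvanovOlshanski2002}, combining Proposition~\ref{PropAPlanchWeakConv} with Lemma~\ref{LemFiniteSupport}, which is exactly the compactness/Arzel\`a--Ascoli and Markov--Krein separation argument you spell out. Your write-up just makes explicit the details the paper leaves to the reference.
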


\begin{proof}
    It follows from Proposition \ref{PropAPlanchWeakConv} and 
    Lemma \ref{LemFiniteSupport}
    by the same argument as the one given in
    \cite[Theorem 5.5]{IvanovOlshanski2002}.
\end{proof}

The idea of using polynomial functions to study the asymptotic shape of Young
diagrams
has been developped by S.~Kerov (see \cite{IvanovOlshanski2002}).
In the case $\alpha=1$, he gave more precise result that what we have here:
he proved that for any polynomial function $F$, the quantity $F(\lambda^n)$ has
Gaussian fluctuations.
A better understanding of polynomials $K_\mu$ could lead to a proof
of a similar phenomena in the general $\alpha$ case,
using the ideas introduced in \cite{'Sniady2006c}.
Let us mention the existence of a partial result
(corresponding to $F=\Ch^{(2)}$) obtained
by J.~Fulman \cite[Theorem 1.2]{FulmanFluctuationChA2} by another method.

{\small
\bibliographystyle{abbrv}

\bibliography{biblio2011}
}
\end{document}